\newtheorem{theorem}{Theorem}
\newtheorem{lemma}[theorem]{Lemma}
\newtheorem{conjecture}{Conjecture}
\newtheorem{definition}{Definition}
\author{Ryan Alweiss\affiliation{Massachusetts Institute of Technology}\thanks{The author was supported by NSF grant NSF-1358659 and NSA grant H98230-16-1-0026.}}
\title{Asymptotic results on Klazar set partition avoidance}
\begin{document}

\keywords{partition avoidance, permutation avoidance}
\received{2017-2-2}

\revised{2017-8-19, 2017-12-22, 2018-3-13}

\accepted{2018-3-13}

\publicationdetails{19}{2018}{2}{7}{3112}

\maketitle

\begin{abstract}  We establish asymptotic bounds for the number of partitions of $[n]$ avoiding a given partition in Klazar's sense, obtaining the correct answer to within an exponential for the block case.  This technique also enables us to establish a general lower bound.  Additionally, we consider a graph theoretic restatement of partition avoidance problems, and propose several conjectures. \end{abstract}

\section{Introduction}

The notion of partitions of $[n]=\{1,2,\dots,n\}$ avoiding a partition $\tau$ were introduced by Klazar in 2000 \citep{klazar}.  Much of the work with such partitions has been explicit and enumerative in nature.  See, for instance, Bloom and Saracino \citep{bloom} and Sagan \citep{sagan}.  They have given inequalities between the number of partitions of $[n]$ avoiding partitions $\tau_1$ and $\tau_2$ using injections or bijections, or have explicitly computed these numbers with generating functions.  Furthermore, most of these approaches can only enumerate the number of partitions for a relatively small class of $\tau$.

There have been some asymptotic approaches to the partitions of $[n]$ avoiding $\tau$.  Examples include the original paper where Klazar introduced the concept \citep{klazar} and a later paper he wrote with Marcus \citep{km}.  However, these efforts have focused on very specific $\tau$ for which the results are closely related to pattern-avoiding permutations in general and the Stanley-Wilf conjecture in particular. In this paper, we examine the growth rate of the number of partitions of $[n]$ avoiding a partition $\tau$ for a different class of $\tau$ through asymptotic techniques.

Define a \textit{set partition} of $[n]$ to be a set of disjoint subsets of $[n]$, $\{B_1, B_2, \ldots , B_r\}$, whose union is $[n]$.  The $B_i$ are called \textit{blocks}.  The order of the elements of each $B_i$ does not matter, nor does the order in which the $B_i$ are written.  Canonically, we arrange the blocks by increasing order of the minimal element in each block, and write the elements of each block in ascending order.  We write the blocks separated by slashes, so $134/25$ describes the partition of $[5]$ where one set is $\{1,3,4\}$ and the other set is $\{2,5\}$.

\begin{definition}

A partition $\sigma$ of $n$ \emph{contains} a partition $\tau$ of $k$ if there exists a subpartition $\sigma'$ of $\sigma$ that has the same relative order as $\tau$.  Otherwise, $\sigma$ \emph{avoids} $\tau$.

\end{definition}

By this we mean there is an increasing bijective map from $\sigma'$ to $\tau$ that preserves blocks.  For instance, $\sigma=124/35$ contains $1/23$, because $\sigma'=1/35$ is a subpartition of $\sigma$.  Here $\sigma'$ has the same relative order as $\tau$, with the bijective map $f(1)=1$, $f(3)=2$, $f(5)=3$.  

\begin{definition}

We use $A_n(\tau)$ to denote the number of set partitions of $[n]$ avoiding a fixed partition $\tau$.

\end{definition}
In this paper, we are primarily concerned with the function $A_n(\tau)$.  Bloom and Saracino \citep{bloom} have studied the behavior of $A_n(\tau)$ through injective mappings, proving results of the form $A_n(\tau_1) \le A_n(\tau_2)$ for various choices of $\tau_1$ and $\tau_2$.  Along these lines, they conjecture that the partition $12 \cdots k$ of a single block is the easiest to avoid. 

\begin{conjecture}
\label{blockmax} \emph{(Bloom, Saracino \citep{bloom})}

If $\tau$ is a set partition of $[k]$ other than $(12 \cdots k)$ then $$A_n(12\cdots k)\geq A_n(\tau)$$ with strict inequality for $n$ sufficiently large.

\end{conjecture}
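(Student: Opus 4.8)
The plan is to reduce the conjecture to a comparison of growth rates, after first pinning down the right-hand side exactly. Observe that a partition $\sigma$ contains $12\cdots k$ precisely when $\sigma$ has a block of size at least $k$, since a subpartition with the relative order of $12\cdots k$ is just $k$ elements lying in a common block. Hence $A_n(12\cdots k)$ equals the number of set partitions of $[n]$ all of whose blocks have size at most $k-1$. Using the exponential generating function $\exp\!\big(\sum_{j=1}^{k-1} x^j/j!\big)$ together with a saddle-point (or a direct double-counting) estimate, I would establish $\log A_n(12\cdots k) = \tfrac{k-2}{k-1}\, n\log n + O(n)$, where the lower bound already follows from counting partitions into blocks of size exactly $k-1$. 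This is the ``block case to within an exponential,'' and it is the benchmark against which every other $\tau$ must be measured.

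The conjecture then reduces to the claim that $12\cdots k$ is the \emph{unique} maximizer of this growth, i.e. that for every $\tau\neq 12\cdots k$ one has $\limsup_n \frac{\log A_n(\tau)}{n\log n} < \frac{k-2}{k-1}$. Since $\tau \neq 12\cdots k$ forces $\tau$ to have at least two blocks, I would argue that avoiding such a $\tau$ is always strictly more expensive than merely capping block sizes at $k-1$. The cleanest family to treat is when $\tau$ has a block of size $k-1$ together with at least one further element: avoiding $\tau$ then forbids, apart from a single top (or bottom) block, the free use of blocks of size $k-1$, since any such interior block is forced by avoidance to absorb all the larger (respectively smaller) elements and so collapses the partition. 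This effectively lowers the usable block size to $k-2$ and the coefficient to $\tfrac{k-3}{k-2} < \tfrac{k-2}{k-1}$. For $\tau$ whose maximal block is smaller, or whose blocks interact across the pattern, I would instead seek a uniform upper bound on $A_n(\tau)$ by encoding a $\tau$-avoider through a bounded amount of data per block plus a coarser partition, again beating $\tfrac{k-2}{k-1}$.

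A strict gap in the leading $n\log n$ coefficient immediately yields strict inequality for all sufficiently large $n$, which is the asymptotic half of the statement. The delicate half is ``$\ge$ for all $n$'': the small cases show the inequality is only weak to begin with (for $k=3$ one checks $A_3(\tau)=A_3(123)$ for every $\tau$, and $A_n(13/2)=2^{n-1}$ meets $A_n(123)$ at $n=3$ before dropping strictly below), so an asymptotic separation cannot by itself deliver every $n$. For that I would aim to construct an injection $\Phi\colon \{\sigma\vdash[n]:\sigma\text{ avoids }\tau\}\hookrightarrow\{\sigma\vdash[n]:\text{all blocks of }\sigma\text{ have size}\le k-1\}$, built by fragmenting the oversized blocks of a $\tau$-avoider into pieces of size $\le k-1$ while recording enough reassembly information in the resulting block structure to recover the original partition.

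The hard part, and the reason the statement is still only a conjecture, is exactly this last step together with the uniform upper bound behind it. A set partition carries no auxiliary decoration in which to store the data needed to invert such a fragmentation, so producing a collision-free $\Phi$ uniformly over all $\tau$ and all $n$ is genuinely obstructed; and for $\tau$ with small maximal block but many blocks the avoidance constraints are subtle enough that a single clean upper bound on $A_n(\tau)$ is not available from the present technique, which yields sharp bounds in the block case and general lower bounds but not matching general upper bounds. My expectation is that the asymptotic separation can be pushed through for every $\tau$, settling strictness for large $n$, while the exact inequality for all $n$ will require a genuinely combinatorial injection that remains to be found.
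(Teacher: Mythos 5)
The statement you set out to prove is a conjecture (due to Bloom and Saracino), and the paper does not prove it either: its contribution toward it is Theorem~\ref{layered}, which establishes the asymptotic analogue for \emph{layered} patterns only, together with stronger conjectures (Conjecture~4 and Conjecture~5 on permeability) that would imply the asymptotic form in general. Your proposal is honest that it does not close the problem, and indeed it cannot as written. Your opening steps are correct and agree with the paper: $A_n(12\cdots k)$ counts partitions with all blocks of size at most $k-1$, and $\log A_n(12\cdots k)=\frac{k-2}{k-1}\,n\log n+O(n)$; the paper gets this by an elementary recursion (Section~2) and by counting ``uniform'' partitions (Section~4) rather than by a saddle-point analysis, but the conclusion is the same. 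Your observation that a strict gap in the $n\log n$ coefficient yields strictness only for large $n$, and that the inequality for all $n$ is a separate combinatorial problem, also matches how the paper frames things.

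The genuine gaps are two. First, your central claim --- that every $\tau\neq 12\cdots k$ of $[k]$ satisfies $\limsup_n \frac{\log A_n(\tau)}{n\log n}<\frac{k-2}{k-1}$ --- is itself precisely the paper's Conjecture~4, which is open. Your sketch of it is not a proof: the ``interior block is forced to absorb all larger elements and collapses'' argument for patterns with a block of size $k-1$ is heuristic (the paper's rigorous version of this idea, valid only for layered $\tau$, is Lemma~\ref{layercontain} combined with Theorem~\ref{blockbound}, and it leans essentially on layeredness), and for $\tau$ whose ``blocks interact across the pattern'' you only say you would \emph{seek} a bound. No general upper-bound technique is known; for non-layered $\tau$ the paper itself has only lower bounds (via Gunby's permeability result), which is exactly why Conjectures~4 and~5 are posed as conjectures. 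Second, even granting the asymptotic separation, the conjecture asserts weak inequality for \emph{all} $n$, and you concede the required injection is missing; Bloom and Saracino constructed such injections only for special families like $ij/1/2/\cdots/k$. So your proposal reduces the conjecture to two open problems rather than solving it --- a reasonable framing, and essentially the same framing as the paper's, but not a proof.
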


In general $A_n(\tau)$ is very large.  In order to better get a handle on it, let $F_n(\tau)=\frac{\log(A_n(\tau))}{n\log(n)}$.  If $A_n(\tau)=n^{bn}$, then $F_n(\tau)=b$.  We are interested in the asymptotic behavior of $F_n(\tau)$ for large $n$, and for partitions $\tau$.  Our interest in $F_n$ stems from the observation that if $F_n(\tau_1)>F_n(\tau_2)$ for large enough $n$, where $\tau_i$ are partitions of $[k]$, then $A_n(\tau_1)>A_n(\tau_2)$ for large enough $n$.  There is one known result about $F_n(\tau)$, due to Klazar and Marcus \citep{km}, which we state here as a lemma.

\begin{lemma}
\label{exponential}
If $\tau$ avoids $123$ and $12/34$, then $A_n(\tau)$ grows exponentially and so $F_n(\tau)=0$.
\end{lemma}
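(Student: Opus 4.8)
The plan is to replace the statement by the equivalent upper bound $A_n(\tau)\le c^n$ for a constant $c=c(\tau)$: since $A_n(\tau)\ge 1$ we always have $F_n(\tau)\ge 0$, while $A_n(\tau)\le c^n$ gives $F_n(\tau)\le \log c/\log n\to 0$. So the entire content is an at-most-exponential upper bound on the number of avoiders.

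First I would pin down the shape of $\tau$ itself. Avoiding $123$ forces every block of $\tau$ to have size at most two, so $\tau$ is a family of arcs (2-element blocks) together with singletons; avoiding $12/34$ says no arc lies entirely to the left of another, i.e. every two arcs cross or nest, i.e. (a one-dimensional Helly argument on the intervals they span) all arcs straddle one common position. A shared endpoint would create a block of size three, so the arcs are vertex-disjoint. Hence $\tau$ is a vertex-disjoint family of arcs all crossing a single vertical line, decorated with singletons, and the only remaining datum is the permutation $\pi_\tau$ recording how the left endpoints match to the right endpoints.

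Next I would use that $A_n$ is monotone under pattern containment: if $\tau\subseteq\tau^\ast$ then any copy of $\tau^\ast$ contains a copy of $\tau$, so avoiding $\tau$ forces avoiding $\tau^\ast$ and $A_n(\tau)\le A_n(\tau^\ast)$. Every $\tau$ of the above shape embeds into a single pure ``superpattern'' matching $\tau^\ast$ whose matching permutation $\pi$, of size $m=O(k^2)$, contains every permutation of size at most $k$ as a pattern, the singletons of $\tau$ being absorbed by choosing endpoints of $\tau^\ast$ left unmatched in the subpattern. This reduces everything to bounding $A_n(\tau^\ast)$, and buys a clean reformulation: scanning the $n-1$ cuts of $[n]$ left to right, the blocks open across a cut carry a \emph{cut-permutation} (the order in which they will be continued versus the order in which they opened), and a copy of the crossing-a-line matching $\tau^\ast$ is exactly a cut whose cut-permutation contains $\pi$. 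Thus $\sigma$ avoids $\tau^\ast$ iff every one of its cut-permutations avoids $\pi$, and since consecutive cuts differ by inserting or deleting at most one element, each $\sigma$ is a walk of length $n$, by single insertions and deletions, through the $\pi$-avoiding permutations. The exponential growth should then enter through the Marcus--Tardos theorem (equivalently Stanley--Wilf), which bounds the number of $\pi$-avoiding permutations of size $N$ by $c_\pi^{\,N}$.

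The step I expect to be the main obstacle is extracting a genuinely \emph{exponential} bound rather than $n^{O(n)}$. The crude route---encode $\sigma$ by its arc diagram, note it has at most $n$ arcs, and count placements---fails, because the number of open blocks across a cut is unbounded even for avoiders (deeply nested non-crossing partitions already show this), so a bare ``few ones'' estimate of F\"uredi--Hajnal type yields only $n^{O(n)}$, i.e. $F_n=O(1)$ rather than $F_n=0$. The real difficulty is that although consecutive cut-permutations differ by a single insertion or deletion, the \emph{position} of an insertion ranges over the whole current permutation, so the number of walks is not obviously exponential even though each fixed length contains only $c_\pi^{\,N}$ admissible permutations; I would try to control this by an amortized encoding of each step using $O(1)$ local data and the $\pi$-avoidance constraint to restrict the effective insertion positions. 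This is precisely the refinement of the linear F\"uredi--Hajnal bound carried out by Klazar and Marcus, and making it uniform across all matching permutations $\pi$ (the mixed crossing/nesting case) is where the weight of the argument lies.
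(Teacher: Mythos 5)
Your proposal is correct and takes essentially the same route as the paper: both deduce from $123$- and $12/34$-avoidance that $\tau$ is contained in a \emph{permutation partition} (a matching all of whose arcs cross one common line, singletons completed to arcs), use monotonicity of $A_n$ under pattern containment, and then invoke the Klazar--Marcus exponential bound for avoiding such a matching (the $1$-regular case of their Corollary 2.2, proved independently by Balogh--Bollob\'as--Morris). The step you flag as ``where the weight of the argument lies'' is exactly what the paper resolves by citation, so your superpattern detour and cut-permutation sketch are harmless but not needed.
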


We first prove that for every such $\tau$ there exists a permutation $\sigma$ of $[k]$ so that $\tau$ is contained in a set partition of $[2k]$ given by $\sigma$.  Specifically, $\{r,k+\sigma(r)\}$ for $1 \le r \le k$ will be exactly the blocks of this partition.  To prove this, note that all blocks of $\tau$ have size one or two by the $123$ avoidance, and furthermore there exists a half integer $c$ so that all blocks of size two have one element greater than $c$ and the other element less than $c$, by the $12/34$ avoidance.  Now, we modify $\tau$ by taking the blocks of one element which is less than $c$ and appending an unused number greater than $c$, and taking the blocks with one element which is greater than $c$ and appending an unused number less than $c$ (possibly negative).  The result has all blocks of size $2$, and has the same relative order as a set partition of $2k$ where $\{r,k+\sigma(r)\}$ are the blocks for $1 \le r \le k$, for some $k$.  The elements less than $c$ correspond to $1 \le i \le k$ and the elements greater than $c$ correspond to $k+1 \le i \le 2k$ in the relative ordering.  Call a set partition whose blocks are $\{r,k+\sigma(r)\}$ a \emph{permutation partition} of $[2k]$, and identify it with $\sigma \in S_k$.  So every $\tau$ which avoids $123$ and $12/34$ is contained in a permutation partition.  Lemma~\ref{exponential} is thus equivalent to showing that only exponentially many partitions of $[n]$ avoid a given fixed permutation partition.

Klazar and Marcus proved that only exponentially many set partitions of $[n]$ avoid a fixed permutation partition; it is the $1$-regular case of their Corollary 2.2 \citep{km}.  Balogh, Bollobas, and Morris independently proved this result as their Theorem 3 \citep{balogh}.  Thus Lemma~\ref{exponential} is established.

A permutation partition corresponding to $\sigma \in S_n$ contains a permutation partition corresponding to $\sigma' \in S_k$ if and only if $\sigma$ contains $\sigma'$.  Here the notion of permutation containment is the usual one; $\sigma$ contains $\sigma'$ if and only if there is a substring of $\sigma$ that has the same relative order as $\sigma'$.  In particular, at most exponentially many set partitions of $[2n]$ avoid a fixed permutation partition of $[2k]$ by Lemma \ref{exponential}, so at most exponentially many permutation partitions of $[2n]$ avoid a fixed permutation partition of $[2k]$.  It follows that at most exponentially many permutations of $[n]$ avoid a fixed permutation of $[k]$.  Thus, Lemma~\ref{exponential} actually implies the well known Stanley-Wilf conjecture first proved by Marcus and Tardos \citep{stanley}.  In fact, the implication of the Stanley-Wilf conjecture was first noticed in 2000 by Klazar in the paper where he developed pattern avoidance in set partitions \citep{klazar}. 

\begin{definition}
A \emph{layered} partition is a partition whose blocks consist of sets of consecutive integers.
\end{definition}

For example, $12/345/67$ is layered, whereas $13/245/67$ is not.

In this paper, we compute the limit of $F_n(\tau)$ for any layered partition $\tau$.  We prove the following theorem.

\begin{theorem}
\label{layered}
If a layered partition $\tau$ of $[k]$ is composed of $r$ blocks, with $k>r$, then  $$A_n=\Theta(1)^nn^{n(1-\frac{1}{k-r})}$$ and so $$\lim_{n \to \infty}F_n(\tau)=1-\frac{1}{k-r}.$$

\end{theorem}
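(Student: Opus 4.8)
The plan is to prove matching bounds $c_1^n\, n^{n(1-1/d)} \le A_n(\tau) \le c_2^n\, n^{n(1-1/d)}$, where I abbreviate $d = k-r$, so that taking logarithms yields the stated limit $F_n(\tau)\to 1-\tfrac1d$. Writing the block sizes of $\tau$ as $a_1,\dots,a_r$, the striking feature is that only $d=\sum_i(a_i-1)$ matters and not how $d$ is split among the blocks, so both bounds must be insensitive to the detailed profile. For the lower bound I would exhibit avoiders directly. Split $[n]$ into $d$ consecutive zones $Z_1,\dots,Z_d$ of size $\approx n/d$ and take all partitions whose blocks are \emph{transversals}, meeting each zone in exactly one element. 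Naming the blocks by $Z_1$ and choosing $d-1$ bijections gives $(\lfloor n/d\rfloor!)^{\,d-1}=\Theta(1)^n n^{n(1-1/d)}$ such partitions, and each avoids $\tau$: in any purported copy, list the $k$ chosen elements in increasing order; since $\tau$ is layered, the elements of a common block are consecutive in this list, so consecutive same-block elements occur exactly $\sum_i(a_i-1)=d$ times, and each forces a strictly higher zone, yet the zone index can rise by at most $d-1$ in total.

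For the upper bound the crux is a structural lemma: \emph{any $r$ blocks each of size $\ge d+1$ already contain $\tau$}, so a $\tau$-avoider has at most $r-1$ blocks of size $\ge d+1$. I would prove it by induction on $r$. Among the $r$ given blocks let $C_1$ minimize its $a_1$-th smallest element, call that value $t$, and use the $a_1$ smallest elements of $C_1$ as the bottom layer. Because the elements of $[n]$ are distinct, every other block has at most $a_1-1$ elements below $t$, hence at least $(d+1)-(a_1-1)=d'+1$ elements above $t$, where $d'=\sum_{i\ge 2}(a_i-1)$; applying the inductive hypothesis to these $r-1$ blocks restricted above $t$ produces a copy of $(a_2,\dots,a_r)$ lying entirely above the bottom layer, completing the copy of $\tau$. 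The base case $r=1$ is just that a block of size $\ge a_1$ contains $12\cdots a_1$.

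Granting the lemma, the count is routine. An avoider is determined by its at most $r-1$ blocks of size $\ge d+1$ together with the partition of the remaining elements, all of whose blocks then have size $\le d$. Specifying which elements fall into the large blocks costs at most $r^n=\Theta(1)^n$, while a partition of at most $n$ elements into blocks of size $\le d$ is bounded by a ``minima are cheap'' encoding: at least a $\tfrac1d$ fraction of elements are block minima (listed canonically in increasing order), and each of the remaining $\le n(1-1/d)$ elements chooses its block in $\le n$ ways, giving at most $2^n n^{n(1-1/d)}$. Multiplying the two factors yields $A_n(\tau)\le \Theta(1)^n n^{n(1-1/d)}$.

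The main obstacle is pinning down the correct threshold in the structural lemma, since everything hinges on it. The exponent $1-1/d$ is produced precisely by partitions into blocks of size exactly $d$, so the leftover after deleting the large blocks must have all blocks of size $\le d$; this forces the threshold for ``large'' to be $d+1$, and makes the inductive peeling — in particular the accounting that each surviving block loses at most $a_1-1$ elements below $t$, leaving exactly enough (namely $d'+1$) to continue — the delicate heart of the argument. The residual tasks (rounding $n/d$, and verifying that both the transversal count and the size-$\le d$ count are $\Theta(1)^n n^{n(1-1/d)}$) are routine Stirling estimates.
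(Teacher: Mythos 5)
Your proposal is correct and follows essentially the same route as the paper: the same structural lemma (any $r$ blocks of size $\ge k-r+1$ contain $\tau$, proved by the same greedy choice of the block minimizing the relevant order statistic, phrased as an induction on $r$ rather than as an explicit relabelling algorithm), the same ``label each element by its large block or as small'' $r^n$ factor in the upper-bound count, and the same transversal (``uniform'') construction with the same zone-counting contradiction for the lower bound. The only minor variation is that you bound the number of partitions with all blocks of size $\le k-r$ by a direct block-minima encoding, where the paper derives this from the recursion $f(n+1)=\sum_{i}\binom{n}{i}f(n-i)$ for the single-block case; this is a cosmetic difference, not a different approach.
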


From this, the converse of Lemma~\ref{exponential}, which was already known to Klazar and Marcus, follows. If $\tau$ contains $123$ or $12/34$ then $A_n(\tau) \ge \min(A_n(123),A_n(12/34))$ and $F_n(\tau) \ge \min(F_n(123),F_n(12/34)) \ge \frac{1}{2}-o(1)$. 

We conjecture (among other things) that $F_n(\tau)$ tends to $1-\frac{1}{g(\tau)}$ where $g(\tau)$ is some integer that is maximized for fixed $k$ exactly when $\tau$ is a single block.  This is essentially an asymptotic form of Conjecture~\ref{blockmax}.

\section{Upper bounds on $A_n$ for a single block}

We use recursion to prove an upper bound on the number of partitions of $[n]$ that avoid the block partition of $[k]$.  This is a fairly well studied sequence \citep{oeis}, but we only need rough bounds here.

Fix $k \ge 2$, and let $L_k=12 \cdots k$ be the partition of a single block.  We let $f(n)=A_n(L_k)$ in this section.  Note the partitions counted by $f(n)$ are exactly those with all blocks of size at most $k-1$.

We have $f(n+1)=\displaystyle \sum_{i=0}^{k-2} \dbinom{n}{i}f(n-i)$, because if the block containing $1$ has $i \le k-2$ other elements, there are $\dbinom{n}{i}$ choices for these other elements, and then $f(n-i)$ ways to partition the remaining elements.

We will show that $$f(n) \le k^n(n^{n\left(1-\frac{1}{k-1}\right)}).$$  We do so by induction.  When $1 \le n \le k$, $f(n) \le n^n \le k^n$, so the bound obviously holds.  This establishes our base case.
\begin{flushleft}
For the inductive step, note we have $$f(n+1) \le \displaystyle \sum_{i=0}^{k-2} n^if(n-i) \le \displaystyle \sum_{i=0}^{k-2} n^ik^{n-i}n^{(n-i)(1-\frac{1}{k-1})}$$ $$\le \displaystyle \sum_{i=0}^{k-2} n^ik^n n^{(n-i)(1-\frac{1}{k-1})}=k^n \displaystyle \sum_{i=0}^{k-2} n^{i+(n-i)(1-\frac{1}{k-1})}$$ by the inductive hypothesis, and because $n-i \le n$.  We have $i+(n-i)(1-\frac{1}{k-1})=n-(\frac{1}{k-1})(n-i)$ is an increasing function of $i$, so the $i=k-2$ term represents the largest term in this sum, and it is $(k-2)+(n-(k-2))(1-\frac{1}{k-1})=(n+1)(1-\frac{1}{k-1})$.
\end{flushleft}
Hence, $$f(n+1) \le k \cdot k^n n^{(n+1)(1-\frac{1}{k-1})} \le k^{n+1} (n+1)^{(n+1)(1-\frac{1}{k-1})}$$ completing the proof.

\section{Upper bounds on $A_n$ in the layered case}

In this section, we let $L_{a_1, \cdots, a_r}$ be the layered partition where the smallest $a_1$ elements are in a block, the $a_2$ next smallest are in a block, and so on, with $\displaystyle \sum a_i=k$.  This partition can also be written $12 \cdots a_1 / (a_1+1) \cdots (a_1+a_2) / \cdots / \cdots (a_1+\cdots +a_r)$.  In particular the block partition dealt with in the previous section is $L_k$.

\begin{lemma}
\label{layercontain}
Any partition of $[n]$ with $r$ blocks of size at least $k-r+1$ contains $L_{a_1, \cdots, a_r}$.

\end{lemma}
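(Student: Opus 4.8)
The plan is to construct the pattern $L_{a_1,\cdots,a_r}$ inside the given partition greedily, from the bottom layer up, using each of the $r$ large blocks for exactly one layer and maintaining a running threshold below which all previously-placed layers live. Write the $r$ blocks of size at least $k-r+1$ as $B_1,\dots,B_r$; the partition may have other blocks, but I will ignore them. I would build layers $1,2,\dots,r$ in order, where layer $i$ consists of $a_i$ elements drawn from a single not-yet-used block, all of them larger than every element used in layers $1,\dots,i-1$. Since $\sum a_i=k$, this produces a subpartition order-isomorphic to $L_{a_1,\cdots,a_r}$.

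Concretely, set $v_0=0$. At step $i$, having fixed a threshold $v_{i-1}$, I restrict attention to elements exceeding $v_{i-1}$ and, among the blocks not yet assigned, choose the one whose $a_i$-th smallest such element is as small as possible. Call it $B^{(i)}$, take its $a_i$ smallest elements above $v_{i-1}$ as layer $i$, and let $v_i$ be the largest of these. By construction layer $i$ lies in the interval $(v_{i-1},v_i]$, so the layers are nested in the correct order and come from $r$ distinct blocks; hence the chosen subpartition matches $L_{a_1,\cdots,a_r}$, provided the greedy never runs out of elements.

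The real content is verifying that at each step some unused block has $a_i$ elements above $v_{i-1}$, and here the size hypothesis enters. The key is a loss bound: by minimality in the choice of $v_i$, any surviving block $B'$ has at most $a_i-1$ elements in $(v_{i-1},v_i]$, so advancing the threshold from $v_{i-1}$ to $v_i$ costs it at most $a_i-1$ usable elements. Telescoping, after step $j-1$ every surviving block $B'$ retains at least
$$|B'|-\sum_{i=1}^{j-1}(a_i-1)\ \ge\ (k-r+1)-\sum_{i=1}^{j-1}a_i+(j-1)$$
elements above $v_{j-1}$. I would then check this is at least $a_j$, which is exactly the inequality $\sum_{i=1}^{j}a_i\le k-r+j$; and this holds because the later blocks contribute $\sum_{i=j+1}^{r}a_i\ge r-j$ (each $a_i\ge 1$), so $\sum_{i=1}^{j}a_i=k-\sum_{i>j}a_i\le k-(r-j)$. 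Thus the greedy succeeds at every step, establishing the containment.

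I expect the main obstacle to be purely the bookkeeping in this counting step: one must confirm that the threshold-minimizing choice is precisely what yields the clean per-block loss of $a_i-1$, and that the telescoped estimate meets $a_j$ with exactly the slack supplied by $k-r+1$. The hypothesis on block size is tight, so there is no room to spare, and the argument would fail with smaller blocks. An alternative would be to phrase the existence of a valid layer-to-block assignment as a bipartite matching and invoke Hall's condition, but the greedy already exhibits the matching explicitly, so I would prefer it.
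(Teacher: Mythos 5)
Your proof is correct and is essentially the paper's argument: both proceed greedily, at step $j$ choosing the not-yet-used block whose relevant order statistic is minimal (for you, the $a_j$-th smallest element above the running threshold; for the paper, the $(a_1+\cdots+a_j-(j-1))$-th smallest element overall), taking a run of $a_j$ consecutive elements from it, and relying on the same tight inequality $a_1+\cdots+a_j-(j-1)\le k-r+1$. The only difference is where the bookkeeping lands: the paper's absolute indexing into each block makes the ``enough elements remain'' issue trivial but requires verifying that consecutive runs $S_j$, $S_{j+1}$ interleave in the right order, whereas your moving threshold makes the ordering automatic at the price of the telescoped loss bound of $a_i-1$ elements per surviving block per step.
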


\begin{proof}

We assume $r>1$, because the $r=1$ case is trivial.

Say a partition has $r$ such blocks $A_1, A_2, \ldots, A_r$.  We assume without loss of generality that for any $j=1,2, \ldots, r$, $A_j$ has the minimum $a_1+ \cdots +a_j-(j-1)$th smallest element out of $\{A_j, A_{j+1}, \ldots, A_r\}$, by way of an algorithm.  Note that $a_1+ \cdots +a_j-(j-1) \le a_1+ \cdots +a_r-(r-1)=k-r+1$, so indeed it makes sense to speak of the $a_1+ \cdots +a_j-(j-1)$th smallest element of each block $A_j, A_{j+1}, \ldots, A_r$.  Of course, reordering the blocks is permissible, as it does not affect the set partition.  It is just for notational convenience.

We now describe the algorithm.  For $1 \le j \le r$, on the $j$th step we relabel $\{A_j, A_{j+1}, \ldots, A_r\}$ in some way so $A_j$ has the minimum $a_1+ \cdots +a_j-(j-1)$th smallest element out of $\{A_j, A_{j+1}, \ldots, A_r\}$.  Since immediately after the $j$th step of the algorithm, $A_j$ has the minimum $a_1+ \cdots +a_j-(j-1)$th smallest element out of $\{A_j, A_{j+1}, \ldots, A_r\}$, and because in later steps only $\{A_{j+1}, \ldots, A_r\}$ are permuted among themselves, this property is preserved.

Let the interval $S_j$ consist of the $a_1+ \cdots +a_{j-1}-(j-2)$th through $a_1+ \cdots +a_j-(j-1)$th smallest elements of $A_j$, inclusive.  The set $S_j$ has precisely $a_j$ elements.

Let $S$ be the union of the $S_j$.  Each $S_j$ is in its own block.  Also, the largest element of $S_j$ is smaller than the smallest element of $S_{j+1}$, because $A_j$ has a smaller $a_1+\cdots +a_j-(j-1)$th smallest element than $A_{j+1}$ does by construction.  As such, $S$ has the same relative order as $L_{a_1, \cdots, a_r}$.

\end{proof}

It follows that any partition of $[n]$ which avoids $L_{a_1, \cdots, a_r}$ has at most $r-1$ blocks of size at least $k-r+1$.  Thus, it suffices to bound from above the number of these partitions.

\begin{theorem}
\label{blockbound}
The number of partitions of $[n]$ with at most $r-1$ blocks of size at least $k-r+1$ is bounded by $(\frac{k+1}{2})^{2n}n^{n\left(1-\frac{1}{k-r}\right)}$.

\end{theorem}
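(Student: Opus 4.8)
The plan is to decompose any such partition into its large part and its small part and bound each separately, invoking the single-block estimate of Section 2 for the small part. Call a block \emph{large} if it has size at least $k-r+1$ and \emph{small} if it has size at most $k-r$. By hypothesis, every partition we wish to count has at most $r-1$ large blocks. Let $T$ be the set of elements lying in large blocks and put $t=|T|$, so the remaining $n-t$ elements lie in small blocks. Since a partition determines $T$, its restriction to $T$ (a partition into at most $r-1$ blocks), and its restriction to $[n]\setminus T$ (a partition into small blocks), it suffices to count these three choices and multiply.

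First I would count the large part. There are $\binom{n}{t}$ ways to choose $T$, and the number of ways to partition $T$ into at most $r-1$ unordered blocks is at most $(r-1)^t$, since assigning each element of $T$ one of $r-1$ labels realizes every such partition at least once. For the small part, the remaining $n-t$ elements are partitioned using only blocks of size at most $k-r=(k-r+1)-1$, so this configuration is counted by $f(n-t)$ in the sense of Section 2 applied with the single block $L_{k-r+1}$; hence there are at most $(k-r+1)^{n-t}(n-t)^{(n-t)\left(1-\frac{1}{k-r}\right)}$ such partitions by the bound proved there. Because every admissible partition arises for exactly one value of $t$ and is then counted at least once, summing the product over $0\le t\le n$ gives an upper bound on the quantity in the theorem.

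It remains to estimate the sum. Using $(n-t)^{(n-t)\left(1-\frac{1}{k-r}\right)}\le n^{n\left(1-\frac{1}{k-r}\right)}$ (both base and exponent are bounded by those on the right, and the exponent $1-\frac{1}{k-r}$ is nonnegative since $k>r$), I would factor $n^{n\left(1-\frac{1}{k-r}\right)}$ out of the sum. What remains is $\sum_{t=0}^{n}\binom{n}{t}(r-1)^t(k-r+1)^{n-t}$, which by the binomial theorem collapses to $\bigl((r-1)+(k-r+1)\bigr)^n=k^n$. Thus the count is at most $k^n\,n^{n\left(1-\frac{1}{k-r}\right)}$, and since $(k-1)^2\ge 0$ gives $k\le\left(\frac{k+1}{2}\right)^2$, we obtain the stated bound $\left(\frac{k+1}{2}\right)^{2n}n^{n\left(1-\frac{1}{k-r}\right)}$.

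The only delicate point is verifying that the decomposition genuinely yields an upper bound: one must check that every admissible partition is counted at least once, and that dropping the size constraint on the large blocks together with the label overcounting can only inflate the total. Once this is in place, the binomial collapse to $k^n$ and the elementary inequality $k\le\left(\frac{k+1}{2}\right)^2$ finish the argument immediately.
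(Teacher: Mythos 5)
Your proof is correct and takes essentially the same approach as the paper: split each partition into its big blocks (at most $r-1$ of them) and its small blocks, bound the big-block assignment by a labeling argument, and invoke the single-block bound of Section 2 for the elements in blocks of size at most $k-r$. The only difference is bookkeeping --- you sum over $t=|T|$ and collapse via the binomial theorem to reach the intermediate constant $k^n$, while the paper assigns every element one of $r$ labels at once to get $r^n(k-r+1)^n$; both quantities are then absorbed into $\left(\frac{k+1}{2}\right)^{2n}$.
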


\begin{proof}

Call a block of size at least $k-r+1$ a \emph{big} block.  There are at most $r-1$ big blocks.

For each element of $[n]$, we first decide whether it is in a block of size at most $k-r$, or in the $j$th big block, for some $1 \le j \le r-1$.  There are at most $r^n$ choices for this initial stage.

Afterward, this uniquely sorts some set of $m$ of the elements $[n]$ into \text{big} blocks.  There are $$A_{n-m}(12 \cdots (k-r+1)) \le A_n(12 \cdots (k-r+1)) \le (k-r+1)^nn^{n\left(1-\frac{1}{k-r}\right)}$$ ways to partition the remaining elements into blocks of size at most $k-r$.

This yields a bound of $$r^n(k-r+1)^nn^{n\left(1-\frac{1}{k-r}\right)} \le \left(\frac{k+1}{2}\right)^{2n}n^{n\left(1-\frac{1}{k-r}\right)}$$ as desired.

\end{proof}

Thus, if we have $f(n)=A_n(L_{a_1, \cdots, a_r})$, then $f(n) \le (\frac{k+1}{2})^{2n}n^{n\left(1-\frac{1}{k-r}\right)}=O_k(1)^nn^{n\left(1-\frac{1}{k-r}\right)}$.

\section{Lower bounds on $A_n$ in the layered case}

Again, let $f(n)=A_n(L_{a_1, \cdots, a_r})$, where the notation for layered partitions is as in the previous section.  Here $\sum a_i=k$ and we are counting the number of partitions of $[n]$ avoiding a fixed layered partition of $[k]$.

\begin{theorem}

We have $f(n) \ge c^nn^{n\left(1-\frac{1}{k-r}\right)}$, where $c=\Omega_k(1)$ and $k>r$.

\end{theorem}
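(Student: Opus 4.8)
The plan is to exhibit an explicit family of partitions of $[n]$ that provably avoid $L_{a_1,\dots,a_r}$ and whose size is already $\Omega_k(1)^n n^{n(1-\frac{1}{k-r})}$. Set $s=k-r$ and assume first that $s\mid n$, writing $m=n/s$. I would split $[n]$ into $s$ consecutive intervals $I_1<I_2<\cdots<I_s$, each of size $m$, and consider only the partitions whose every block is a \emph{transversal}, i.e.\ a set containing exactly one element of each $I_t$. Such a partition is the same data as a choice, for each $t\ge 2$, of a bijection from $I_t$ to the $m$ blocks (indexed by their element of $I_1$), so the number of such partitions is exactly $(m!)^{s-1}$.

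First I would prove that every transversal partition avoids $L_{a_1,\dots,a_r}$. Suppose not: then there are distinct blocks $B_1,\dots,B_r$ and disjoint consecutive value-ranges (\emph{chunks}) $C_1<C_2<\cdots<C_r$ with $|B_i\cap C_i|\ge a_i$. Let $d_i$ be the number of intervals $I_t$ that meet $C_i$. Since a transversal meets each interval at most once, the $a_i$ elements of $B_i\cap C_i$ lie in $a_i$ distinct intervals, each of which meets $C_i$; hence $d_i\ge a_i$ and $\sum_i d_i\ge\sum_i a_i=k=s+r$. On the other hand, because the chunks are consecutive and disjoint, each interval meets a contiguous run of chunks, so it contributes to $\sum_i d_i$ once, plus once for each of the chunk-boundaries lying strictly inside it. There are only $s$ intervals and $r-1$ internal boundaries, so $\sum_i d_i\le s+(r-1)$. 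Comparing the two estimates gives $s+r\le s+r-1$, a contradiction.

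The counting step is then routine: by Stirling, $(m!)^{s-1}=\Omega_k(1)^n\, m^{m(s-1)}=\Omega_k(1)^n\, n^{n(1-\frac{1}{s})}$, which is exactly the claimed bound with $c=\Omega_k(1)$. To remove the divisibility hypothesis I would take $m=\lfloor n/s\rfloor$, build a transversal partition on the first $sm$ elements, and leave the fewer than $s$ remaining elements as singletons, regarded as lying in the last interval. Rereading the avoidance argument shows it is unaffected, since singletons also meet each interval at most once, so the inequality $\sum_i d_i\le s+(r-1)$ still holds; and the count $(m!)^{s-1}$ changes only by an absorbable $\Theta_k(1)^n$ factor. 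The degenerate case $s=1$ (exponent $0$) is trivial since $f(n)\ge 1$.

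I expect the main obstacle to be the avoidance argument, and within it the bookkeeping that converts ``each transversal meets each interval at most once'' into the clean pigeonhole inequality $\sum_i d_i\le s+r-1$; the delicate point is correctly accounting for chunk-boundaries according to whether they fall inside a single interval or across an interval boundary (and for possible gaps between chunks). By comparison, the factorial count and the divisibility patch are straightforward.
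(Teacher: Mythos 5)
Your proposal is correct and takes essentially the same approach as the paper: your ``transversal'' partitions are exactly the paper's ``uniform'' partitions (one element of each block in each of the $k-r$ consecutive intervals), counted identically as $(m!)^{k-r-1}$ and estimated via Stirling, with the same handling of the divisibility issue. The only difference is cosmetic, in how avoidance is verified --- the paper counts, along an embedded copy $x_1<\cdots<x_k$, how many consecutive pairs can lie in the same block (at most $k-r-1$, since same-block pairs must lie in different intervals), whereas you double-count interval--chunk incidences; both amount to the same pigeonhole bound.
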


\begin{proof}

It suffices to prove the result in the case that $n$ is divisible by $k-r$, because $c$ can easily be adjusted.

Call a partition of $[n]$ \emph{uniform} if it is composed of $\frac{n}{k-r}$ blocks of size $k-r$, and for every $0 \le j < k-r$ it is true that $S_j=[j(\frac{n}{k-r})+1,(j+1)(\frac{n}{k-r})]$ contains exactly one element from each block.

First, we count the number of uniform partitions.  Every block of the partition has exactly one element in each $S_j$, so for every $0 < j<k-r$ we can choose some way to match the $\frac{n}{k-r}$ elements $[j(\frac{n}{k-r})+1,(j+1)(\frac{n}{k-r})]$ with the $\frac{n}{k-r}$ blocks.  This yields a total of 

$$\left(\frac{n}{k-r}\right)!^{k-r-1}=\left(\Omega_k(1)^n n^{\frac{n}{k-r}}\right)^{k-r-1}=\Omega_k(1)^n n^{n\left(1-\frac{1}{k-r}\right)}$$

\begin{flushleft}
uniform partitions, using Stirling's approximation.
\end{flushleft}

Now we prove that these uniform partitions avoid $L_{a_1, \cdots, a_r}$, for some fixed $(a_1, \cdots, a_r)$.  Assume such a partition contained $L_{a_1, \cdots, a_r}$.  Equivalently, this partition contains some $x_i$ with $x_1<x_2< \cdots <x_k$ so that the smallest $a_j$ elements which were not in the smallest $a_1+ \cdots +a_{j-1}$ of these elements are their own block.  Consider pairs $(x_i,x_{i+1})$ in this partition.  If $x_i$ and $x_{i+1}$ are in the same block, they must be in different $S_j$.  Since there are only $k-r$ choices for $S_j$, if we consider the sequence $x_1<x_2< \cdots <x_k$, there are at most $k-r-1$ numbers with $1 \le i \le k-1$ so that $x_i$ and $x_{i+1}$ are in different $S_j$.  Hence, there are at most $k-r-1$ numbers $i \in [1,k-1]$ so that $x_i$ and $x_{i+1}$ are in the same block, and at least $r$ numbers $i \in [1,k-1]$ so that $x_i$ and $x_{i+1}$ are in different blocks.  This is a contradiction, because $L_{a_1, \cdots, a_r}$ has only $r-1$ such numbers.  \end{proof}

Note that the above proof works when $L_{a_1, \cdots, a_r}$ is replaced by any partition of $[k]$ such that there are at most $r$ choices for $i$ such that $1 \le i \le k-1$ and $(i,i+1)$ are in different blocks of that partition.

Clearly it is easier to avoid a partition than to avoid a pattern it strictly contains.

Thus, a partition of $[k]$ that contains a partition $P$ of $[k']$ with less than $r'$ choices for $i$ such that $1 \le i \le k'-1$ and $(i,i+1)$ are in different blocks of $P$ has $\Omega_{k'}(1)^nn^{n\left(1-\frac{1}{k'-r'}\right)}=\Omega_{k}(1)^nn^{n\left(1-\frac{1}{k'-r'}\right)}$ partitions of $[n]$ avoiding it as well.  This represents a nontrivial lower bound for the general case.

Combining all these results, we have $$A_n(L_{a_1, \cdots, a_r})=\Theta(1)^nn^{n(1-\frac{1}{k-r})}$$ whenever $k-r \ge 1$.  For the $r=k$ case, the singleton subcase $\tau=1/2/\cdots /k$, $A_n(1/2/\cdots /k)$ is the number of ways to put $n$ elements in at most $k-1$ urns, so it is at most $(k-1)^{n}$ and grows only exponentially.  This establishes Theorem~\ref{layered}.

Interestingly, Bloom and Saracino \citep{bloom} prove that in the $ij/1/2/ \cdots /k$ case $A_n$ is at most as large as it is in the singleton case via an injection argument, even though we observe that partitions with fewer blocks tend to have larger values of $A_n$. This shows that in fact $A_n$ is also exponential for $ij/1/2/ \cdots /k$, and the $j=i+1$ case of this corresponds to the $r=k-1$ case of our main theorem.

\section{A graph-theoretic restatement}

Another possible direction of research is to try to apply graph theoretic results and techniques to partition avoidance problems.

\begin{definition}

A \emph{complete partite} undirected graph on $n$ vertices is a graph whose complement is the union of vertex-disjoint complete graphs.  

\end{definition}

\begin{definition} A \emph{directed acyclic complete partite graph} (DACP) is a directed acyclic graph whose underlying undirected graph is a complete partite graph.
\end{definition}

We can form a bijection between set partitions of $[n]$ and DACPs with $n$ vertices.  Given a set partition of $[n]$, direct an edge from $a$ to $b$ if and only if $a$ and $b$ are in different blocks with $a>b$.  Clearly this graph lacks directed cycles, and the complement of its underlying undirected graph is the union of cliques corresponding to blocks of the partition.  Hence from any partition of $[n]$ we form a unique DACP on $n$ vertices.

Given a DACP on $n$ vertices, we can likewise reconstruct the set partition it represents.  Call two vertices $v$ and $u$ of a DACP \textit{indistinguishable} if they are not connected, and have the same in-neighborhoods and out-neighborhoods.  This is an equivalence relation, and therefore partitions the $n$ vertices into equivalence classes.  Given two equivalence classses, there must exist a vertex $v$ with an in-edge from all vertices in one class and an out-edge to all vertices in the other.  As such, the DACP induces a total order on the equivalence classes.  Thus given an equivalence class $A$ of size $r$ which is larger than $s$ other elements, the elements of $A$ will be the $(s+1)$st through $(s+r)$th smallest elements for any extension of the partial order of the DACP to a total order.  Hence, any two total orders are isomorphic.  Pick an arbitrary such order, an assignment of the elements of $[n]$ to the vertices of the DACP.  Then this corresponds to a set partition, where the independent sets are the blocks.  The partition is uniquely determined because of the isomorphism property.

Thus, there is a natural bijection between set partitions of $[n]$ and DACPs with $n$ vertices.

Furthermore, if a set partition of $[n]$ contains a set partition of $[k]$ then the former has a subset order isomorphic to the latter, and so the DACP corresponding to the former has the DACP corresponding to the latter as an induced subgraph.  Likewise if a DACP has another DACP as an induced subgraph, this represents a containment of their corresponding partitions.

Thus $A_n(\tau)$ is the number of DACPs on $n$ vertices that avoid some specific DACP on $k$ vertices (corresponding to $\tau$) as a subgraph.

\section{Conjectures}

We propose several conjectures about the behavior of $A_n(\tau)$ and $F_n(\tau)$.

\begin{conjecture} For all $\tau$, $\lim_{n \to \infty}F_n(\tau)=1-\frac{1}{c}$ for some constant $c$ depending on $\tau$.
\end{conjecture}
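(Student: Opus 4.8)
The plan is to prove the sharper statement $A_n(\tau)=\Theta_\tau(1)^n\,n^{n(1-1/c)}$ for an explicit positive integer $c=c(\tau)$; the two-sided bound forces $F_n(\tau)=(1-1/c)+O(1/\log n)$, which simultaneously establishes that the limit exists and that it has the required quantized form $1-1/c$. For the parameter I would take $c(\tau)=\max_{P\subseteq\tau}S(P)$, where $P$ ranges over subpartitions of $\tau$ and $S(P)$ is the number of indices $i$ with $i$ and $i+1$ lying in the same block of $P$ (the all-singleton partition $1/2/\cdots/k$, where every $S(P)=0$, is exponential and is handled separately as in Theorem~\ref{layered}). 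For a layered $\tau=L_{a_1,\ldots,a_r}$ the maximum is attained at $\tau$ itself and equals $k-r$, recovering Theorem~\ref{layered}; for permutation partitions it equals $1$, matching Lemma~\ref{exponential}. This is precisely the integer $g(\tau)$ alluded to before the conjecture.

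The lower bound is then within reach using the uniform-partition construction. Fix a subpartition $P\subseteq\tau$ of size $k'$ attaining $c=S(P)$, and consider the uniform partitions of $[n]$ into blocks of size $c$. Any copy of $P$ inside such a partition has its elements $x_1<\cdots<x_{k'}$ distributed over the $c$ value-intervals in nondecreasing order, so the interval index changes at most $c-1$ times; since each uniform block meets each interval exactly once, a same-block adjacency of $P$ forces such a change, so the copy exhibits at most $c-1$ same-block adjacencies, strictly fewer than the $S(P)=c$ that $P$ requires. Hence these uniform partitions avoid $P$, and a fortiori $\tau$. By Stirling their number is $\left(\frac{n}{c}\right)!^{\,c-1}=\Omega_\tau(1)^n\,n^{n(1-1/c)}$, giving $A_n(\tau)\ge A_n(P)\ge\Omega_\tau(1)^n\,n^{n(1-1/c)}$.

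For the upper bound I expect the real difficulty. In the layered case Lemma~\ref{layercontain} did all the work: a single block of $\ge k-r+1$ consecutive integers already realizes a layer, so a partition with $r$ big blocks automatically contains $\tau$, and an avoider therefore obeys a one-parameter constraint that the single-block bound converts directly into $O_\tau(1)^n n^{n(1-1/(k-r))}$. For non-layered $\tau$ this collapses: the blocks of $\tau$ may be interleaved (as permutation partitions are), and a partition can contain arbitrarily large, or many, blocks while still avoiding $\tau$ because it never realizes the required relative order among blocks. What is needed instead is a genuinely two-dimensional containment statement of Marcus--Tardos / Klazar--Marcus type: any partition whose block-versus-interval incidence pattern is dense enough must contain $\tau$. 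The main obstacle is that, in the interleaved regime, such a statement degenerates to exactly the Klazar--Marcus bound underlying Lemma~\ref{exponential} when $c=1$, so any complete proof must subsume the deepest known results.

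Concretely, I would pass to the DACP reformulation developed in the last section, where $A_n(\tau)$ counts $n$-vertex DACPs avoiding a fixed induced sub-DACP; this realizes $A_n(\tau)$ as the speed of a hereditary property of ordered directed graphs, for which the speed in the factorial range is known to be $n^{(1-1/c+o(1))n}$ with $c$ an integer colouring parameter, and the quantization asserted by the conjecture should be exactly this colouring number. The remaining--and principal--obstacle is to verify that the decomposition into boundedly many homogeneous parts underlying those upper bounds survives the extra acyclicity and complete-multipartite constraints carried by DACPs, and that the resulting colouring parameter coincides with the same-block-adjacency invariant $c=\max_{P\subseteq\tau}S(P)$ used in the lower bound. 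Matching these two descriptions of $c$ is where I expect the essential work to lie.
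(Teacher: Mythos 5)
The statement you set out to prove is Conjecture 2 of the paper; it is an open problem there, not a theorem, so there is no proof in the paper to compare yours against. The paper only offers evidence: the layered case (Theorem~\ref{layered}), the exponential case (Lemma~\ref{exponential}), and Gunby's general lower bound $F_n(\tau)\ge 1-\frac{1}{pm(\tau)}-o(1)$ \citep{gunby}. Judged on its own terms, your proposal has one sound half and one missing half. The lower bound is correct: your invariant $c(\tau)=\max_{P\subseteq\tau}S(P)$ equals the paper's permeability $pm(\tau)$ (by the duality between minimum hitting sets and maximum disjoint subfamilies for families of intervals: cuts must hit every interval $[a,b-1]$ with $a<b$ in a common block, and a maximum disjoint subfamily of such intervals yields a subpartition $P$ with $S(P)$ equal to its size), and your uniform-partition argument is exactly the construction of Section 4 of the paper, in the generalized form noted after Theorem 3 and established in general by Gunby. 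So what you are really proposing is Conjecture 5, which indeed subsumes Conjecture 2.

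The genuine gap is the upper bound $A_n(\tau)\le O_\tau(1)^n\,n^{n(1-1/c)}$, which is the entire content of the conjecture once the lower bound is in hand. You correctly diagnose that Lemma~\ref{layercontain} has no analogue for interleaved $\tau$, but your substitute step -- passing to the DACP reformulation and invoking a ``known'' theorem that hereditary properties with factorial speed have speed $n^{(1-1/c+o(1))n}$ for an integer $c$ -- cites a result that does not exist in the required setting. That quantization of factorial-range speeds is a theorem of Balogh, Bollob\'as and Weinreich for hereditary properties of \emph{unordered} graphs; the DACPs here are \emph{ordered} structures (containment must respect the linear order of $[n]$), and for ordered graphs, ordered hypergraphs, and Klazar set partitions the factorial range is precisely what is open. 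The only piece of it that was known is the $c=1$ case, i.e.\ the Klazar--Marcus and Balogh--Bollob\'as--Morris results behind Lemma~\ref{exponential} \citep{km,balogh}. So the sentence ``the speed in the factorial range is known to be $n^{(1-1/c+o(1))n}$'' is not a citation but a restatement of the conjecture itself, and the subsequent task you assign yourself (matching that hypothetical colouring parameter to $\max_{P\subseteq\tau}S(P)$) is the open problem, not a verification. Your proposal is a reasonable research plan -- and its lower-bound half plus the identification $c(\tau)=pm(\tau)$ are real content -- but it is not a proof.
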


\begin{conjecture} For all $\tau$, the constant $c$ in Conjecture $2$ is an integer. \end{conjecture}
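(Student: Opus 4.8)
The plan is to establish both conjectures at once by pinning down $c$ as an explicit integer statistic of $\tau$ and proving that $A_n(\tau)$ is squeezed between two bounds of the form $\Theta_k(1)^n n^{n(1-1/c)}$. For a subpattern $P \subseteq \tau$ on $[m]$, let $b(P)$ be the number of indices $i \in [1,m-1]$ for which $i$ and $i+1$ lie in the same block of $P$, and put $g(\tau) = \max\{1,\ \max_{P \subseteq \tau} b(P)\}$, which is an integer by construction. This statistic specializes correctly: a single block has $b(L_k) = k-1$; reading all of a layered $L_{a_1,\dots,a_r}$ gives $b = \sum_i(a_i-1) = k-r$, and no subpattern beats this, so $g(L_{a_1,\dots,a_r}) = k-r$ in agreement with Theorem~\ref{layered}; and any partition with at least two blocks satisfies $g(\tau) \le \max\{1,k-2\}$, so for $k \ge 3$ the maximum is attained uniquely at the single block, as in the introduction. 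Moreover $g(\tau) \le 1$ forces $\tau$ to avoid both $123$ and $12/34$, since a block of size $\ge 3$, or two same-block pairs one entirely below the other, would produce a subpattern with $b \ge 2$; in that regime Lemma~\ref{exponential} already yields $c = 1 = g(\tau)$. It therefore remains to treat $g := g(\tau) \ge 2$ and to prove $A_n(\tau) = \Theta_k(1)^n n^{n(1-1/g)}$.

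The lower bound $A_n(\tau) \ge \Omega_k(1)^n n^{n(1-1/g)}$ is essentially already in hand. Fix a subpattern $P$ attaining $b(P) = g$; reading $P$ in order, it breaks into $t = m - g$ maximal same-block runs with pairwise non-interleaving supports, and hence has exactly $t-1$ adjacent pairs in different blocks. Because any embedding into a uniform partition with blocks of size $g$ forces at least $m - g = t$ adjacent pairs in different blocks, exceeding the $t-1$ present in $P$, those uniform partitions avoid $P$, and therefore avoid $\tau$ as well since $P \subseteq \tau$. Counting these uniform partitions by Stirling's approximation, exactly as in the layered lower bound, produces the factor $n^{n(1-1/g)}$.

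The crux is the matching upper bound $A_n(\tau) \le \Theta_k(1)^n n^{n(1-1/g)}$, and this is where I expect nearly all of the difficulty to lie. The template is the layered case: one seeks a containment-forcing structural lemma generalizing Lemma~\ref{layercontain}, asserting that any partition whose block structure is sufficiently rich must contain $\tau$. Its contrapositive would confine every $\tau$-avoider to boundedly many exceptional blocks together with a bulk in which all blocks have size at most $g$, and the count would then follow from the single-block bound with parameter $g+1$ exactly as in Theorem~\ref{blockbound}. The obstruction is that, unlike in the layered case, large blocks alone do not force a general $\tau$: one must additionally realize the prescribed interleaving of $\tau$'s blocks --- in particular small blocks nested among the elements of larger ones, as in $1245/3$ --- and extracting this relative order from an unstructured family of blocks is a genuine Ramsey/extremal problem.

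I would attack this through the graph reformulation, where $A_n(\tau)$ counts DACPs on $n$ vertices omitting the DACP of $\tau$ as an induced subgraph, so the goal becomes a supersaturation/container statement for induced subgraphs of directed acyclic complete partite graphs. Concretely, I would try to show that a $\tau$-free DACP admits a bounded-complexity certificate --- constantly many large independent sets (the exceptional blocks) plus a remainder all of whose independent sets have size at most $g$ --- and then enumerate the certificates. The analogue for $g=1$ is precisely the Klazar--Marcus and Balogh--Bollobas--Morris theorem underlying Lemma~\ref{exponential}, so the essential task is to bootstrap a Marcus--Tardos-type density increment to higher $g$ while keeping the certificate efficiently countable. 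The main risk is that such a certificate, even if it exists, could overcount and land the exponent strictly above $1-1/g$; controlling this waste so that the upper and lower exponents coincide is, I expect, the true obstacle.
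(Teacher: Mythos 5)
Your proposal cannot be compared against a proof in the paper, because the paper has none: the statement you were given is Conjecture 3, which the paper leaves open (it is subsumed by Conjecture 5 and verified only for $k \le 4$). What your write-up actually does is rediscover Conjecture 5. Your statistic $g(\tau)=\max\{1,\max_{P\subseteq\tau} b(P)\}$ coincides with the permeability $pm(\tau)$ of the paper's definition whenever $pm(\tau)\ge 1$ (each same-block adjacent pair of a subpattern must straddle a distinct boundary of any decomposition of $[k]$ into intervals meeting each block at most once, giving $g\le pm$; conversely, a greedy interval decomposition produces a subpattern witnessing $g\ge pm$), and your maximum with $1$ handles the all-singleton case. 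So your target claim $A_n(\tau)=\Theta_k(1)^n n^{n(1-1/g(\tau))}$ is exactly Gunby's conjecture restated. Your lower bound is correct, but it is not new: it is the remark following the lower-bound theorem in Section 4 (uniform partitions with blocks of size $g$ avoid any pattern having too few different-block adjacencies), in the general form proved by Gunby \citep{gunby}, who showed $F_n(\tau)\ge 1-\frac{1}{pm(\tau)}-o(1)$.

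The genuine gap is the upper bound $A_n(\tau)\le O_k(1)^n n^{n(1-1/g(\tau))}$, which you explicitly defer to a hoped-for container or supersaturation argument for DACPs. Without it you have only a one-sided estimate, which does not establish that $\lim_{n\to\infty}F_n(\tau)$ exists (Conjecture 2), let alone that the resulting constant $c$ is an integer; integrality is precisely what a lower bound alone cannot deliver, since a priori the limit (if it exists) could land anywhere in $[1-\frac{1}{g},1)$. The paper can prove a matching upper bound only in the layered case, where Lemma~\ref{layercontain} converts avoidance into the structural statement ``at most $r-1$ big blocks'' and Theorem~\ref{blockbound} counts such partitions; as you yourself observe, this mechanism breaks for patterns such as $1245/3$ whose blocks interleave, and no substitute is known --- that is exactly why Conjectures 2 through 5 remain open. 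Your proposal is therefore best read as a correct reduction of the integrality statement to Conjecture 5, together with a proof of the easy half of that conjecture; the hard half, and hence the statement itself, remains unproved.
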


\begin{conjecture} When $\tau$ is a partition of $[k]$, the constant $c$ in Conjecture 2 satisfies $c \leq k-1$ with equality only for the one-block partition. \end{conjecture}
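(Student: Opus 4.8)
The plan is to deduce Conjecture 3 from a single uniform upper bound. Since $1-\frac{1}{c}$ is increasing in $c$, the assertion ``$c\le k-1$, with equality only for the one-block partition'' is equivalent to ``$\lim_{n\to\infty}F_n(\tau)\le 1-\frac{1}{k-1}$, with equality only for $\tau=12\cdots k$.'' The one-block partition already attains the value $1-\frac{1}{k-1}$ by Theorem~\ref{layered} with $r=1$, so the entire content is an upper bound. Writing $r=r(\tau)$ for the number of blocks of $\tau$, I would aim for the estimate $A_n(\tau)=O_k(1)^n\,n^{n\left(1-\frac{1}{k-r}\right)}$ whenever $k>r$, degenerating to a purely exponential bound (hence $\lim F_n=0$) once $r\ge k-1$. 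Since $r\ge 1$ with $r=1$ exactly for the one-block partition, while $r\ge 2$ pushes the exponent down to at most $1-\frac{1}{k-2}$, this estimate gives $c(\tau)\le k-1$ with equality precisely when $\tau=12\cdots k$, which is Conjecture 3.

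The route to the estimate is to generalize Theorem~\ref{blockbound} and Lemma~\ref{layercontain} from layered $\tau$ to arbitrary $\tau$. Call a block \emph{big} if it has size at least $k-r+1$. As in Theorem~\ref{blockbound}, I would first assign each element of $[n]$ either to a block of size at most $k-r$ or to one of the big blocks; the small part is then controlled by the single-block count of Section 2, contributing the factor $n^{n\left(1-\frac{1}{k-r}\right)}$, and it remains to show that the big blocks contribute only an exponential factor $O_k(1)^n$. For layered $\tau$ this is immediate, since Lemma~\ref{layercontain} shows that $r$ big blocks already force $\tau$, so a $\tau$-avoider has at most $r-1$ big blocks and these may be recorded within an $O_k(1)^n$ factor.

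The main obstacle is that Lemma~\ref{layercontain} fails for non-layered $\tau$: big blocks do not force a crossing pattern. For example $123/456$ has two blocks of size $3=k-r+1$ yet avoids the crossing partition $13/24$, because its blocks are nested rather than interleaved; indeed $A_n(13/24)$ counts noncrossing partitions and is only exponential. Thus for general $\tau$ the number of big blocks need not be bounded, and the correct mechanism mixes two phenomena: the \emph{layered skeleton} of $\tau$ (its maximal runs of consecutive integers lying in a common block) is forced by block size as in Lemma~\ref{layercontain}, while the way these runs are distributed among the blocks is a crossing constraint that must be matched by the relative interleaving of the big blocks. I would try to encode this constraint as avoidance of a fixed permutation partition by the big-block order, so that a $\tau$-avoider either has boundedly many big blocks or has big blocks whose interleaving avoids that permutation partition; in the latter case Lemma~\ref{exponential}, the Klazar--Marcus and Balogh--Bollob\'as--Morris bound on permutation-partition avoidance, limits the number of such configurations to an exponential factor.

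The hard part will be proving this combined forcing lemma uniformly over all $\tau$, and correctly separating the superexponential contribution of the layered skeleton from the merely exponential contribution of the crossings so that the two never compound beyond $n^{n\left(1-\frac{1}{k-r}\right)}$. Controlling \emph{mixed} patterns, those simultaneously possessing large blocks and genuine crossings, is the crux, since there one must run the block-size counting of Theorem~\ref{blockbound} and the Marcus--Tardos-type control of Lemma~\ref{exponential} at the same time; in particular the reduction of an arbitrary crossing structure on the big blocks to avoidance of a single permutation partition is the step I expect to resist a clean formulation. Once the estimate is in hand, strictness is automatic: any $\tau\ne 12\cdots k$ has $r\ge 2$ blocks, so the estimate gives $c(\tau)\le k-2<k-1$, and combined with $c(12\cdots k)=k-1$ from Theorem~\ref{layered} this is exactly Conjecture 3.
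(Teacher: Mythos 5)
This statement is one of the paper's \emph{conjectures}, not a theorem: the paper offers no proof of it, noting only that it is verified for $k\le 4$ and that it would follow from Conjecture 5 (Gunby's permeability conjecture, since $pm(\tau)\le k-r$ with $pm(\tau)=k-1$ forcing $r=1$). So the first thing to say is that your submission cannot be measured against a paper proof, and, to your credit, you do not claim to have one. What you give is a correct and sensible \emph{reduction}: conditional on Conjecture 2 (so that $c$ exists), the conjecture would indeed follow from the uniform estimate $A_n(\tau)=O_k(1)^n n^{n\left(1-\frac{1}{k-r}\right)}$ for every $\tau$ of $[k]$ with $r$ blocks, because $r\ge 2$ then forces $c\le k-2<k-1$ while Theorem~\ref{layered} gives $c=k-1$ for $12\cdots k$. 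Your diagnosis of why the paper's method does not extend is also exactly right: Lemma~\ref{layercontain} is genuinely layered-specific, and your example is correct --- $123/456$ has two blocks of size $k-r+1=3$ yet avoids $13/24$, whose avoiders are the noncrossing partitions and number only exponentially many, so ``many big blocks'' alone forces nothing for non-layered $\tau$.

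The genuine gap is that the ``combined forcing lemma'' on which everything rests is never formulated, let alone proved. The claim that a $\tau$-avoider either has boundedly many big blocks or has big blocks whose interleaving avoids a fixed permutation partition (so that Lemma~\ref{exponential} caps their contribution at $O_k(1)^n$) is precisely the open content of the problem, and it is not clear it is even the right dichotomy: for a general $\tau$ the obstruction mixes block sizes and crossings inseparably, and the count of ``ways to distribute elements among unboundedly many big blocks'' is not an exponential factor that the scheme of Theorem~\ref{blockbound} can absorb --- that theorem crucially uses that there are at most $r-1$ big blocks, so the assignment stage costs only $r^n$. Without the forcing lemma, the superexponential and exponential contributions you wish to separate can compound, and no bound of the form $n^{n\left(1-\frac{1}{k-r}\right)}$ follows. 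Note also that your target estimate is a weakening of Conjecture 5 (since $pm(\tau)\le k-r$, one checks this by counting the $k-r$ links between consecutive elements within blocks, each interval boundary consuming a distinct link), so even the weakened bound you aim for subsumes nothing easier than what the paper leaves open. In short: the reduction and the negative example are correct and match the paper's framing, but the crux step is missing, which is unsurprising --- the statement remains a conjecture.
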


Note that Conjecture 4 implies Conjecture~\ref{blockmax} in the limit, and is also a strengthening of Conjecture 2.

The next conjecture is a revised version of a conjecture by the author that appeared in an earlier version of this paper.  In fact, it was first conjectured by Gunby \citep{gunby} who built upon the work of the author.  To state it, we first need a definition.

\begin{definition}

Given a set partition $\tau$ of $[n]$, let the \emph{permeability} $pm(\tau)$ be the minimum $k$ such that $[n]$ can be partitioned into $k+1$ intervals (i.e. sets of consecutive integers) and each of these intervals has at most one element from each block of $\tau$.

\end{definition}

Gunby \citep{gunby} showed that $A_n(\tau) \ge 1-\frac{1}{pm(\tau)}$ in general, and that this is a strengthening of Theorem 3.

\begin{conjecture}

$F_n(\tau)=1-\frac{1}{pm(\tau)}$

\end{conjecture}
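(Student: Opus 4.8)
The plan is to establish the matching upper bound, since the lower bound $F_n(\tau)\ge 1-\frac{1}{pm(\tau)}$ is already due to Gunby. Writing $m=pm(\tau)$, it suffices to prove $A_n(\tau)\le \Theta(1)^nn^{n\left(1-\frac{1}{m}\right)}$; together with the lower bound this gives $\log A_n(\tau)=n\left(1-\frac{1}{m}\right)\log n+O(n)$, hence $F_n(\tau)\to 1-\frac{1}{m}$. So from here on I would worry only about the upper bound.

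First I would make permeability usable by recording the mechanism that ties it to containment. Color each element of $[k]$ by its block of $\tau$; then $pm(\tau)+1$ is the least number of intervals partitioning $[k]$ into rainbow pieces (no color repeated inside a piece), and a left-to-right greedy cut realizes this optimum. The relevance is the following observation, which already explains why $m$ is the right exponent. Suppose $[n]$ is cut into strips $S_0<S_1<\cdots$ so that each block of $\sigma$ meets every strip at most once, and suppose $\sigma$ contains $\tau$ via $x_1<\cdots<x_k$. The map sending position $i$ to the strip containing $x_i$ is nondecreasing, and on any maximal constant run positions $i<j$ lying in a common block of $\tau$ would force $x_i,x_j$ into a common block and a common strip, a contradiction; so each run is a rainbow piece. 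Thus a copy of $\tau$ exhibits $[k]$ as at most (number of strips used) rainbow pieces, forcing at least $m+1$ strips. This is exactly why Gunby's uniform partitions on $m$ strips avoid $\tau$, and it is this mechanism I would have to run in reverse.

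For the upper bound the plan is therefore to fix the optimal decomposition $[k]=I_1\cup\cdots\cup I_{m+1}$ into consecutive rainbow pieces and try to embed $\tau$ whenever $\sigma$ is richer than $m$ strips can accommodate. Concretely I would locate $m+1$ left-to-right regions of $[n]$ and thread blocks of $\sigma$ across them so as to realize the cross-piece block identifications of $\tau$, selecting the exact colored pattern by a Ramsey/pigeonhole argument on the candidate elements the regions supply; the DACP reformulation, in which containment is an induced-subgraph condition, is likely the cleanest language for this selection. The partitions that \emph{lack} this threading are then to be counted by a profile argument generalizing Theorem~\ref{blockbound}, with the single-block estimate $A_n(L_{m+1})\le (m+1)^nn^{n\left(1-\frac{1}{m}\right)}$ supplying the exponent $n\left(1-\frac{1}{m}\right)$ and the remaining exceptional structure costing only a $\Theta(1)^n$ factor. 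As in the passage from Lemma~\ref{layercontain} to Theorem~\ref{blockbound}, I expect the counting step to be routine once the structural dichotomy is in hand.

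The hard part is precisely this containment/structure lemma for non-layered $\tau$, and the layered strategy does not transplant naively. Two obstacles stand out. First, block size is simply the wrong measure: for $\tau=13/2$ we have $m=1$, yet by Lemma~\ref{exponential} there are exponentially many avoiders (a single giant block already avoids $13/2$, having no middle element in a different block), whereas only one partition of $[n]$ has all blocks of size $\le m=1$; so no statement of the form ``all but boundedly many blocks have size at most $m$'' can hold, and the true structure must be read off the rainbow/strip geometry rather than from block sizes. Second, containment is an \emph{induced} condition, so the embedding must send distinct blocks of $\tau$ to genuinely distinct blocks of $\sigma$, not merely find enough repetitions; controlling this is where I expect to need the container method of Klazar--Marcus and Balogh--Bollob\'as--Morris. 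The crux is making the argument \emph{sharp} at the threshold $m=pm(\tau)$ — failing just below $m$ so Gunby's construction survives, and succeeding just above it — and pinning the constant to $pm(\tau)$ rather than to some larger invariant of $\tau$ is, I believe, exactly why the statement is still only a conjecture.
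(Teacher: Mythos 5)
The statement you were given is Conjecture 5 of the paper, not a theorem: the paper offers no proof of it (it records only Gunby's lower bound $F_n(\tau)\ge 1-\frac{1}{pm(\tau)}$ and the remark that all conjectures hold for $k\le 4$), so there is no proof of the upper bound to compare yours against. Your reduction is sound as far as it goes: granting Gunby's result, the conjecture is equivalent to the upper bound $A_n(\tau)\le \Theta(1)^n n^{n\left(1-\frac{1}{pm(\tau)}\right)}$, and your rainbow-run observation (a copy of $\tau$ inside a partition cut into strips, each block meeting each strip at most once, forces at least $pm(\tau)+1$ strips) is a correct account of why permeability governs the lower-bound construction --- it is the same mechanism as the paper's uniform-partition argument in the layered case.

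But the upper bound is where your proposal stops being a proof and becomes a research plan, and you say so yourself. The ``containment/structure lemma'' --- embed $\tau$ into any $\sigma$ that is richer than $m$ strips can accommodate --- is never given a precise statement: you do not say what ``richer'' means for an arbitrary partition $\sigma$ of $[n]$, and without a formal dichotomy (either $\sigma$ contains $\tau$, or $\sigma$ lies in an explicitly described class of size $\Theta(1)^n n^{n(1-\frac{1}{m})}$) there is nothing for the counting step to count; the analogue of Lemma~\ref{layercontain} is exactly what is absent, and in the layered case that lemma \emph{is} the proof. Your two named obstacles are genuine: block size is the wrong measure (your $13/2$ example is exactly right), and induced containment requires producing genuinely distinct blocks of $\sigma$, which pigeonhole alone does not supply. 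Invoking Ramsey arguments and the Klazar--Marcus and Balogh--Bollob\'as--Morris machinery names tools but does not apply them; those results handle the permutation-partition case, where $F_n=0$, and nothing in them is known to produce the exponent $1-\frac{1}{pm(\tau)}$ sharply. So the verdict is: correct framing, correct identification of the missing lemma, but the lemma itself --- which is the entire content of the conjecture --- is missing from your argument, just as it is missing from the literature.
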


This conjecture subsumes all of the conjectures before it.  All conjectures are true for $k \le 4$.  The case where $pm(\tau)=1$ is Lemma 1.

After establishing some or all of these conjectures, for various $\tau$ one could obtain more precise asymptotics.  Consider the function $\frac{A_n(\tau)}{n^{n\left(1-\frac{1}{c}\right)}}$.  If it is known that $C^{-n}n^{n\left(1-\frac{1}{c}\right)} \le A_n(\tau) \le C^nn^{n\left(1-\frac{1}{c}\right)}$ for some $C>1$, perhaps the gap between the two multipliers in front of $n^{n\left(1-\frac{1}{c}\right)}$ can be narrowed to something sub-exponential.

We can formulate a conjecture along these lines.

\begin{conjecture}

$|F_n(\tau)-(1-\frac{1}{c})|=O(\frac{1}{\log(n)})$ for the appropriate $c$ so that $\lim_{n \to \infty}F_n(\tau)=1-\frac{1}{c}$ and $c \ge 2$.

\end{conjecture}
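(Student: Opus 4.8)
The plan is to reduce the statement to a two-sided \emph{exponential envelope} bound of the form
$$C^{-n}n^{n\left(1-\frac{1}{c}\right)} \le A_n(\tau) \le C^{n}n^{n\left(1-\frac{1}{c}\right)}$$
for some constant $C=C(\tau)>1$, and then to observe that such an envelope already forces the claimed rate of convergence. Taking logarithms gives
$$-n\log C + n\left(1-\tfrac{1}{c}\right)\log n \le \log A_n(\tau) \le n\log C + n\left(1-\tfrac{1}{c}\right)\log n,$$
and dividing through by $n\log n$ yields
$$\left|F_n(\tau)-\left(1-\tfrac{1}{c}\right)\right| \le \frac{\log C}{\log n} = O\!\left(\frac{1}{\log n}\right).$$
Thus the entire content of the conjecture is the assertion that the multiplicative slack in $A_n(\tau)$ around $n^{n(1-1/c)}$ is at most singly exponential in $n$; any such $\Theta(1)^n$ control automatically upgrades the crude limit $\lim F_n=1-1/c$ to an $O(1/\log n)$ estimate, and the same computation is uniform in the cases $c\ge 2$ at issue. (The boundary case $pm(\tau)=1$, i.e. $c=1$, is Lemma~\ref{exponential}: there $A_n(\tau)=\Theta(1)^n$, so $F_n(\tau)=\Theta(1/\log n)\to 0$, consistent with the same argument.)

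For every layered $\tau$ this envelope is already in hand: the main theorem establishes $A_n(L_{a_1,\cdots,a_r})=\Theta(1)^n n^{n(1-1/(k-r))}$ with $c=k-r=pm(\tau)$, so the logarithmic computation above settles the conjecture in the layered case with no further work. More generally, Gunby~\citep{gunby} supplies the left-hand inequality $A_n(\tau)\ge \Omega_k(1)^n\, n^{n\left(1-\frac{1}{pm(\tau)}\right)}$ for arbitrary $\tau$, so the only missing ingredient in the general case is a \emph{matching upper bound} $A_n(\tau)\le O_k(1)^n\, n^{n\left(1-\frac{1}{pm(\tau)}\right)}$.

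Producing this upper bound is where I expect the main difficulty to lie, and it is essentially Conjecture~6 together with explicit singly-exponential (rather than merely $n^{o(n)}$) control on the constant. The strategy would mirror the layered argument: writing $pm(\tau)=p$ and splitting $[k]$ into the $p+1$ intervals guaranteed by the definition of permeability, one would seek an analogue of Lemma~\ref{layercontain} asserting that any partition of $[n]$ admitting $p+1$ intervals of sufficiently large width relative to its blocks must contain $\tau$. Every $\tau$-avoider would then be forced to lack that structural feature, and the count of such restricted partitions would be bounded by a recursion of the type $f(n+1)=\sum_i \binom{n}{i}f(n-i)$ from the single-block section, which contributes exactly the $n^{n(1-1/p)}$ growth with a singly-exponential prefactor. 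The hard part is that for non-layered $\tau$ the relevant forcing lemma is far more delicate than Lemma~\ref{layercontain}: because the relative order \emph{within} the intervals now matters, one must control not merely the block sizes but the full interleaving pattern, and it is precisely this combinatorial forcing step — not the elementary logarithmic reduction above — that remains the genuine obstacle.
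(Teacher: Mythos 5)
The statement you were asked to prove is the paper's final conjecture; the paper gives no proof of it, remarking only that it ``appears to be true for small cases, and certainly holds for blocks.'' Your proposal, correctly and candidly, does not claim to prove it in general either, so there is no hidden flaw being papered over: the logarithmic reduction you give is correct. Any envelope $C^{-n}n^{n(1-\frac{1}{c})} \le A_n(\tau) \le C^{n}n^{n(1-\frac{1}{c})}$ yields $|F_n(\tau)-(1-\frac{1}{c})| \le \frac{\log C}{\log n}$ upon taking logarithms and dividing by $n\log n$, and Theorem~\ref{layered} supplies exactly such an envelope for every layered $\tau$ with $k>r$, with $c=k-r=pm(\tau)$. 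This proves the conjecture for all layered patterns, in particular recovering the paper's claim about blocks, and is surely the reasoning the paper left implicit in that remark.

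Two points of precision. First, your citation of Gunby for the general-$\tau$ lower bound in the envelope form $A_n(\tau)\ge \Omega_k(1)^n n^{n(1-\frac{1}{pm(\tau)})}$ is stronger than what the paper records, which (correcting an evident typo in its statement) is only the limit inequality $\lim_{n\to\infty} F_n(\tau) \ge 1-\frac{1}{pm(\tau)}$; a statement about the limit alone carries no rate information, so even the one-sided $O(\frac{1}{\log n})$ bound for general $\tau$ requires verifying that Gunby's construction actually produces an exponential, rather than merely subfactorial, prefactor. Second, the obstruction you identify --- a matching upper bound $A_n(\tau)\le O_k(1)^n n^{n(1-\frac{1}{pm(\tau)})}$ for non-layered $\tau$, via some forcing lemma replacing Lemma~\ref{layercontain} --- is indeed where the problem lives, and your diagnosis that interval-interleaving structure (not just block sizes) must be controlled is apt; note only that what you call ``Conjecture 6'' is the paper's Conjecture 5 (the permeability conjecture), here needed in a strengthened exponential-error form. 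In sum, your proposal is an honest and correct reduction of an open conjecture to its open core, together with a complete proof in the layered case; that is the most any argument can currently deliver for this statement.
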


This conjecture appears to be true for small cases, and certainly holds for blocks.

A graph theoretic approach may also bear some fruit.  The limit for $F_n$ of the single block, $1-\frac{1}{k-1}$, seems reminiscient of the Erd\H{o}s-Stone theorem \citep{erdos} especially given its restatement as avoidance of an independent set of size $k$, the complement of a graph of chromatic number $k-1$.  In light of the graph theoretic restatement, perhaps there is some meaningful connection.  Note that Klazar and Marcus found the cases where $F_n=0$, proving their generalization of the Stanley-Wilf conjecture, using a different graph theoretic restatement involving undirected graphs \citep{km}.  Their interpretation does not generalize to stating the entire problem graph theoretically, however.

\section{Acknowledgements}

This research was made possible by NSF grant NSF-1358659 and NSA grant H98230-16-1-0026.  It was conducted at the University of Minnesota Duluth REU, under the supervision of Joe Gallian who suggested the problem and helped with the exposition.  Special thanks to Ben Gunby for his helpful comments on the manuscript and noting the connection to the Stanley-Wilf conjecture.

\end{document}